\documentclass[reqno, 12pt]{amsart}

\usepackage{amsmath}
\usepackage{color}
\usepackage{amssymb}
\usepackage{enumerate}
\usepackage[hdivide={3cm,*,3cm}]{geometry}

\newcommand{\N}{\mathbb{N}}

\usepackage{color}

\newcommand{\Const}{\operatorname{Const}}

\newtheorem{theorem}{Theorem}
\newtheorem{prop}{Proposition}

\newtheorem{lm}{Lemma}
\newtheorem{remark}[theorem]{Remark}

\newtheorem{df}{Definition}
\newtheorem{cor}{Corollary}

\DeclareMathOperator{\cl}{cl}

\title{Sequence spaces of Baire-one functions and their porosity}

\author{Marek Balcerzak}
\address{Institute of Mathematics,
         Lodz University of Technology, al. Politechniki 8,
         PL-93-590 \L\'od\'z,
          Poland}
         \email{marek.balcerzak@p.lodz.pl}

\author{Gertruda Ivanova}
\address{Faculty of Mathematics and Computer Science,
      University of {\L}\'{o}d\'{z},
      Banacha 22, PL-90-238 {\L}\'{o}d\'{z}, Poland}
\email{gertruda.ivanova@math.uni.lodz.pl}

\date{}

\subjclass[2020]{26A03, 26A21, 28A05, 54E52}

\keywords{sequence spaces, Baire-one functions, lower porosity, Banach spaces}

\begin{document}

\begin{abstract}
We consider several families of sequences of Baire-one self-maps of $[0,1]$ investigated by
A. Alikhani-Koopaei. They form a chain of inclusions where each space is closed and nowhere dense in the next one. We prove that each space is lower $2/3$-porous in the next one. If one modifies these families where sequences of Baire-one functions are real-valued uniformly bounded, then they are Banach spaces and each space of the respective chain is c-porous in the next one.
\end{abstract}

\maketitle

\section{Introduction}
The aim of this paper is to extend and strengthen some results of the articles \cite{Ali1} and \cite{Ali}
by A. Alikhani-Koopaei that concern sequence spaces of Baire-one functions. It was proved in \cite{Ali} that several
spaces are nowhere dense in some larger ones. We are going to replace nowhere denseness by a sharper notion of porosity.

Let $(X, \varrho)$ be a compact metric space and let $B_1(X, X)$ denote the collection of Baire class-one self-maps on $X$. 
As in \cite{Ali}, a non-autonomous $B_1$-dynamical system, or $B_1$-NDS, is defined as a sequence of  $B_1(X, X)$. Let $\mathcal{F}$ be the family of all such systems. Then for any $F = (f_n)_{n\in \mathbb N} \in \mathcal{F}$ and $G = (g_n)_{n\in \mathbb N}\in \mathcal{F}$, we define a metric by
$$\varrho_{\mathcal{F}}(F, G) = \sup_{n \in \mathbb{N}} \varrho_u(f_n, g_n),$$
where $\varrho_u$ is the uniform metric on $B_1(X, X)$ (that is, $\varrho_u(f,g):=\sup_{x\in X} \varrho(f(x),g(x))$
for $f,g\colon X\to X$).
Denote by $\mathcal F(X)$ the family of sequences in $\mathcal F$ that are pointwise convergent. We start with a general folklore like property.

\begin{prop} \label{PP1}
The space $(\mathcal F,\varrho_{\mathcal F})$ is complete and its subspace $\mathcal F(X)$ is closed.
\end{prop}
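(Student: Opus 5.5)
The plan has two parts. First, completeness of $(\mathcal F,\varrho_{\mathcal F})$ will come from completeness of $(B_1(X,X),\varrho_u)$. Second, closedness of $\mathcal F(X)$ will come from a standard $\varepsilon/3$ argument.

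\emph{Completeness of $B_1(X,X)$.} Since $X$ is compact, it is complete, so the space of all maps $X\to X$ with $\varrho_u$ is complete.
\begin{itemize}
\item Let $(h_k)$ be $\varrho_u$-Cauchy in $B_1(X,X)$. It converges uniformly to some $h\colon X\to X$.
\item We must show $h\in B_1(X,X)$, i.e.\ that $h^{-1}(U)$ is $F_\sigma$ for every open $U\subseteq X$.
\item Write $U=\bigcup_m U_m$ with $U_m=\{y:\varrho(y,X\setminus U)>1/m\}$, so that $\overline{U_m}\subseteq U$.
\item Pass to a subsequence with $\varrho_u(h_k,h)<1/(3m)$ for the relevant indices. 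Then $h^{-1}(U)=\bigcup_m\bigcup_k h_k^{-1}(V_{m})$ for suitable open sets $V_m$ squeezed between the sets $U_m$.
\item It is cleaner to use
\[h^{-1}(U)=\bigcup_{m}\bigcup_{k\ge k_m} h_k^{-1}\bigl(\{y:\varrho(y,X\setminus U)>2/m\}\bigr)\cap \ldots\]
Simplest of all: fix $m$ and $k$ with $\varrho_u(h_k,h)<1/m$. Then
\[h^{-1}(U_{2m})\subseteq h_k^{-1}(U_m)\subseteq h^{-1}(U).\]
Taking the union over $m$, $h^{-1}(U)$ is a countable union of $F_\sigma$ sets, hence $F_\sigma$.
\end{itemize}
This is the key step: a uniform limit of Baire-one maps is Baire-one.

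\emph{Completeness of $\mathcal F$.} Let $F^k=(f^k_n)_n$ be $\varrho_{\mathcal F}$-Cauchy.
\begin{itemize}
\item For each $n$, the sequence $(f^k_n)_k$ is $\varrho_u$-Cauchy, uniformly in $n$. Let $f_n$ be its uniform limit, which lies in $B_1(X,X)$ by the previous step.
\item Given $\varepsilon>0$, choose $K$ with $\sup_n\varrho_u(f^k_n,f^l_n)<\varepsilon$ for all $k,l\ge K$.
\item Letting $l\to\infty$ gives $\sup_n\varrho_u(f^k_n,f_n)\le\varepsilon$ for all $k\ge K$.
\item Hence $F^k\to F=(f_n)$ in $\varrho_{\mathcal F}$.
\end{itemize}

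\emph{$\mathcal F(X)$ is closed.} Let $F^k\in\mathcal F(X)$ with $F^k\to F=(f_n)$. We show $(f_n(x))$ converges for each $x$.
\begin{itemize}
\item Given $\varepsilon>0$, pick $k$ with $\varrho_{\mathcal F}(F^k,F)<\varepsilon/3$.
\item Since $(f^k_n(x))_n$ converges, it is Cauchy. Pick $N$ with $\varrho(f^k_n(x),f^k_m(x))<\varepsilon/3$ for all $n,m\ge N$.
\item The triangle inequality then gives $\varrho(f_n(x),f_m(x))<\varepsilon$ for all $n,m\ge N$.
\item So $(f_n(x))$ is Cauchy in the complete space $X$ and converges. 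Thus $F\in\mathcal F(X)$.
\end{itemize}

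The main obstacle is the first step, the Baire-one closure under uniform limits. The remaining steps are routine once the inclusions $h^{-1}(U_{2m})\subseteq h_k^{-1}(U_m)\subseteq h^{-1}(U)$ are verified. These follow from the triangle inequality applied to the distance to $X\setminus U$.
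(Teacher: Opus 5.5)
Your argument has the same structure as the paper's proof: coordinatewise uniform limits give completeness, and an $\varepsilon/3$ estimate at a fixed point $x$ gives closedness of $\mathcal F(X)$. You go beyond the paper in one place. The paper simply asserts that a uniform limit of Baire-one maps is Baire-one, and you set out to prove it. There, however, the inclusion you display is false as stated, because the indices are swapped.

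Write $d(y):=\varrho(y,X\setminus U)$. The hypothesis $\varrho_u(h_k,h)<1/m$ gives only $|d(h_k(x))-d(h(x))|<1/m$. So $d(h(x))>1/(2m)$ yields no positive lower bound on $d(h_k(x))$, and $h^{-1}(U_{2m})\subseteq h_k^{-1}(U_m)$ fails. A counterexample: take $X=I$, $U=(0,1]$, $h(x)=x$ and $h_k(x)=\max\{x-\frac{9}{10m},0\}$. Then $\varrho_u(h_k,h)=\frac{9}{10m}<\frac1m$, and $x=1/m$ lies in $h^{-1}(U_{2m})$ but not in $h_k^{-1}(U_m)$.

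The correct sandwich is obtained by choosing $k_m$ with $\varrho_u(h_{k_m},h)<1/(2m)$. Then
\[
h^{-1}(U_m)\subseteq h_{k_m}^{-1}(U_{2m})\subseteq h^{-1}(U).
\]
Indeed, $d(h(x))>1/m$ forces $d(h_{k_m}(x))>1/(2m)$. Conversely, $d(h_{k_m}(x))>1/(2m)$ forces $d(h(x))>0$, so $h(x)\in U$ because $X\setminus U$ is closed. Since $U=\bigcup_m U_m$, this gives $h^{-1}(U)=\bigcup_m h_{k_m}^{-1}(U_{2m})$. That is a countable union of $F_\sigma$ sets, as you wanted.

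With that correction the proof is complete. The half-written bullets before your ``simplest of all'' version should just be deleted.

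Note also that you are taking Baire-one to mean that preimages of open sets are $F_\sigma$. For $X=I$, the case used in the rest of the paper, this coincides with being a pointwise limit of continuous maps, so nothing is lost there.
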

\begin{proof}
Let $(F_\ell)_{\ell\in\mathbb N}$ be a Cauchy sequence in $\mathcal F$.
Hence for each $k\in\mathbb N$ there exists $m_k\in\mathbb N$ such that 
$\varrho_\mathcal F(F_\ell,F_{m_k})<\frac{1}{k}$ for all $\ell>m_k$.  Let $F_\ell=(f_n^\ell)_{n\in\mathbb N}$ for $\ell\in\mathbb N$ where $f^\ell_n$ is a Baire-one self-map on $X$.
Consequently, we have
\begin{equation} \label{Eq}
\varrho(f_n^\ell(x),f_n^{m_k}(x))<\frac{1}{k}
\end{equation}
for all $n\in\mathbb N$, $x\in X$ and $\ell>m_k$. Hence for any $n\in\mathbb N$ and $x\in X$, the sequence $(f_n^\ell(x))_{\ell\in\mathbb N}$ is Cauchy in $X$. Thus the limit $f_n(x):=\lim\limits_{\ell\to\infty}f_n^\ell(x)$ exists. Taking $\ell\to\infty$ in (\ref{Eq}), we obtain
$$\varrho(f_n(x),f_n^{m_k}(x))\le\frac{1}{k}$$
for all $n\in\mathbb N$ and $x\in X$. 
Hence 
$$\varrho(f_n(x),f_n^{\ell}(x))<\frac{2}{k}$$
for all $n\in\mathbb N$ and $x\in X$. Thus for each $n\in\mathbb N$, the function $f_n$ is a uniform limit of the sequence $(f_n^\ell)_{\ell\in\mathbb N}$ of Baire-one self-maps on $X$. So, $f_n$ is also Baire-one.
Put $F:=(f_n)_{n\in\N}$. We have 
$\varrho_{\mathcal F}(F_\ell,F)<\frac{2}{k}$ for all $\ell>m_k$. Hence $F_\ell\to F$ in $\mathcal F$. 

To show the second assertion, let $(F_\ell)_{\ell\in\mathbb N}$, with $F_\ell=(f_n^\ell)_{n\in\mathbb N}$ for $\ell\in\mathbb N$, be a sequence in $\mathcal F(X)$ convergent to $F=(f_n)_{n\in\mathbb N}\in\mathcal F$.
It is enough to show that $(f_n)_{n\in\mathbb N}$ is pointwise convergent. Fix $x\in X$. We will prove that $(f_n(x))_{n\in\mathbb N}$ is Cauchy in $X$. Let $k\in\mathbb N$ be arbitrary. Clearly, 
$(F_\ell)_{\ell\in\mathbb N}$ is Cauchy in $\mathcal F$, so it satisfies condition (\ref{Eq}) for all $n\in\mathbb N$ and $x\in X$. Since $F_{m_k}\in\mathcal F(X)$,
the sequence
$(f_n^{m_k})_{n\in\mathbb N}$ is pointwise convergent. So, we pick $s_k\in\mathbb N$ such that
$\varrho(f_n^{m_k}(x),f_p^{m_k}(x))<\frac{1}{k}$ for all $n,p>s_k$. Now, for all $n,p>s_k$ we have
$$\varrho(f_n(x),f_p(x))\le\varrho(f_n(x),f_n^{m_k}(x))+\varrho(f_n^{m_k}(x),f_p^{m_k}(x))+\varrho(f_p^{m_k}(x),f_p(x))
<\frac{3}{k}$$
which yields the desired Cauchy condition.
\end{proof}

From now on, we will assume that $X=I$ where $I$ is the unit interval $[0,1]$. 

Recall that a sequence $(f_n)_{n\in\mathbb N}$ of functions from $I$ to $\mathbb R$ is equi-$B_1$ if for every $\varepsilon > 0$, there exists a mapping (a gauge)
$\delta\colon I \to \mathbb{R}^+$ such that for all $n\in\mathbb N$, the condition $|x-y| < \min\{\delta(x), \delta(y)\}$ implies $|f_n(x)- f_n(y)| < \varepsilon$ for any $x, y \in I$.

Note that every term $f_n$ of an equi-$B_1$ sequence is a Baire-one function, by the characterization of 
Lee Tang and Zhao \cite{LTZ}.
In fact this characterization inspired Lecomte \cite{Le} to introduce the notion of an equi-$B_1$ family of functions. Independently, the same notion was created later by Alikhani-Koopaei \cite{Ali2}. Properties of equi-$B_1$ families have been investigated recently in \cite{BHH}, \cite{BHKS}, \cite{BKS}.

A. Alikhani-Koopaei in \cite{Ali} considered the following subfamilies of $\mathcal{F}$:

\begin{itemize}
    \item \textbf{$\mathcal{F}_u(I)$} -- the family of all uniformly convergent sequences with Baire-one terms;
    \item \textbf{$\mathcal{F}_{eq}(I)$} -- the family of pointwise convergent equi-$B_1$ sequences;
    \item \textbf{$\mathcal{F}_{p.w.}(I)$} -- the family of all sequences of Baire-one functions that converge pointwise and their limits are Baire-one functions;
    \item \textbf{$\mathcal{F}(I)$} -- the family of all Baire-one sequences that converge pointwise.
\end{itemize}

It is known (see \cite{Ali1}, \cite{Ali}) that the following inclusions hold
\begin{equation} \label{inclu}
\mathcal{F}_u(I)\subset \mathcal{F}_{eq}(I)\subset \mathcal{F}_{p.w.}(I)\subset \mathcal{F}(I).
\end{equation}

In \cite{Ali1}, it is proved that the first three families of sequences are closed subsets of the fourth. In \cite{Ali}, it is also shown that the first family is nowhere dense in the second, and the second is so in the third.

Note that the metric $\varrho_{\mathcal F}$ in the space $\mathcal F$ with $X=I$ is of the form
$$\varrho_{\mathcal F}(F,G)=\sup_{n\in\mathbb N}\sup_{x\in I}\mid f_n(x)-g_n(x)\mid$$
where $F=(f_n)_{n\in\mathbb N}$, $G=(g_n)_{n\in\mathbb N}$. In fact, the space $\mathcal F$ can be embedded in a normed linear space $b\mathcal F$ of all uniformly bounded sequences of Baire-one functions from $I$ to $\mathbb R$
equipped with the norm
$$\| F\|_\infty:=\sup_{n\in\mathbb N}\sup_{x\in I}\mid f_n(x)\mid$$
where $F=(f_n)_{n\in\mathbb N}$. Thus $F\in b\mathcal F$ whenever there exists $M>0$ such that $\mid f_n(x)\mid\, \le M$ for all $n\in\mathbb N$ and $x\in I$.
In consequence, $\varrho_{\mathcal F}(F,G)=\| F-G\|_\infty$ for $F,G\in\mathcal F$. Therefore, through the paper, we will  use the norm $\|\cdot\|_\infty$ instead of the metric $\varrho_\mathcal F$.
The normed space $b\mathcal F$ and its linear subspaces will be studied in Section 3.


\section{Porosity results for subspaces of $\mathcal F$}
Porosity and $\sigma$-porosity of subsets of $\mathbb R$ or a metric space are notions of smallness
stronger than nowhere denseness. It was used and investigated by several authors.  For extensive surveys on this topic, see \cite{Zaj} and \cite{Zaj1}.

Let $X$ be an arbitrary metric space. By $B\left(x,r\right)$ we denote the open ball centred at $x$ with a radius $r>0$. Let $B(x,0):=\emptyset$.
Fix $M\subset X,$ $x\in X$ and $r>0$. We define
\begin{equation*}
\gamma\left( x,r,M\right): =\sup\{t\geq 0:\exists_{z\in X} \;B\left(z,t\right)\subset B\left(x,r\right)\setminus M\}.
\end{equation*}
Subsequently, let

\begin{equation*}
\underline{p}\left(M,x\right):=2\liminf\limits_{r\rightarrow 0^{+}}\frac{\gamma\left(x,r,M\right)}{r}
\end{equation*}
and
\begin{equation*}
\underline p\left(M\right):=\inf\{\underline p\left(M,x\right)\colon x\in M\}.
\end{equation*}

\begin{df} 
We say that $M$ is lower porous whenever $\underline p\left(M,x\right)>0$ for each $x\in M$. Given
$q\in\left(0,1\right]$, the set $M$ is called lower $q$-porous if $\underline p\left(M\right)=q$. The set $M$ is at least lower $q$-porous if $\underline p\left(M\right)\geq q.$ One can consider the case where $M\subset Z\subset X$ and then the lower porosity relative to the subspace $Z$ can be studied. Then it is reasonable to use notation 
$\underline{p}_Z (M,x)$ and $\underline{p}_Z(M)$.
\end{df}

Let us mention that many porosity phenomena in various function classes were studied in \cite{FIW}, \cite{IWK}, \cite{IW}.
We are going to improve the results of \cite{Ali} for subspaces of $\mathcal F(I)$ in the language of porosity. We will start from a general observation.

\begin{lm} \label{L_e}
Let $\Const$ consist of sequences in $\mathcal F$ whose terms are constant equal to the same 
constant $c\in I$.  Let $M, Z$ be subfamilies of $\mathcal F$ such that
\begin{itemize}
\item $\Const\subset M\subset Z$, and
\item $F+H\in Z$ for all $F\in Z$ and $H\in\Const$ with $(F+H)[I]\subset I^\mathbb N$.
\end{itemize}
Then for each $r\in (0,1)$ we have $\gamma({\mathbf 0},r,M)\le r/3$ where $\mathbf 0$ is the zero sequence. Consequently, $\underline{p}_Z(M)\le 2/3$.
\end{lm}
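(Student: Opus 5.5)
The plan is to work in the subspace $Z$ with the metric induced by $\|\cdot\|_\infty$, so balls are $B_Z(G,t)=\{H\in Z\colon \|H-G\|_\infty<t\}$. I will show that if $G\in Z$, $t>0$ and $B_Z(G,t)\subset B_Z(\mathbf 0,r)\setminus M$, then $t\le r/3$. Taking the supremum over such $G,t$ then gives $\gamma(\mathbf 0,r,M)\le r/3$. Since $\mathbf 0\in\Const\subset M$, we get $\underline p_Z(M,\mathbf 0)\le 2\cdot\frac13=\frac23$, and hence $\underline p_Z(M)\le 2/3$. The case $t=0$ is trivial.

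Fix such $G=(g_n)$ and $t>0$. Since $G$ lies in its own ball, $G\in B_Z(\mathbf 0,r)$. All values $g_n(x)$ lie in $I$, so they lie in $[0,r)$. Put
$$s:=\sup_{n,x} g_n(x)<r \qquad\text{and}\qquad m:=\inf_{n,x} g_n(x)\ge 0.$$
I will derive two inequalities.

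\emph{Upward shift.} For $0<a<1-s$, the constant sequence $H_a\equiv a$ belongs to $\Const$, and $G+H_a$ takes values in $[0,1]$. By the second hypothesis, $G+H_a\in Z$. If moreover $a<t$, then $G+H_a\in B_Z(G,t)\subset B_Z(\mathbf 0,r)$, so $s+a=\|G+H_a\|_\infty<r$. Suppose $t>r-s$. Because $r<1$, we can choose $a$ with $r-s<a<\min\{t,1-s\}$, which is a contradiction. Hence $t\le r-s$.

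\emph{Constant in the middle.} Let $c=(s+m)/2\in[0,r)\subset I$ and let $C$ be the constant sequence equal to $c$. Then $C\in\Const\subset M$, so $C\notin B_Z(G,t)$. Since $\|G-C\|_\infty=(s-m)/2$, this gives $(s-m)/2\ge t$, that is, $m\le s-2t$.

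Combining the two inequalities gives
$$0\le m\le s-2t\le (r-t)-2t=r-3t,$$
so $t\le r/3$, as required.

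The only delicate point is the upward shift. One must check that the shifted sequence $G+H_a$ really is in $Z$, which is where the closure hypothesis on $Z$ and the condition $r<1$ are used: they guarantee that $a$ can be taken in the interval $(r-s,\min\{t,1-s\})$ while keeping values in $I$. A downward shift is not needed, and would not be available anyway, since values might leave $I$.
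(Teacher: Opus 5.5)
Your proof is correct and is essentially the paper's argument: shifting $G$ upward by a constant sequence (allowed by the hypothesis on $Z$ together with $r<1$) forces $\sup_{n,x} g_n(x)\le r-t$, and then a constant sequence from $\Const\subset M$ placed in the middle of the range of $G$ lies in the ball around $G$ unless $t\le r/3$. The only differences are cosmetic: you centre the constant at the midpoint of $[\inf_{n,x} g_n(x),\sup_{n,x} g_n(x)]$ rather than at half of $r$ minus the inner radius, and you shift by some $a$ strictly above $r-\sup_{n,x} g_n(x)$ rather than by exactly $r-\|G\|_\infty$.
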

\begin{proof}
Consider a ball $B(\mathbf 0, r)$ in $Z$ with $r\in (0,1)$, and an arbitrary ball 
$B(G,s)\subset B(\mathbf 0,r)$ with $G\in Z$. 

We have $\|G\|_\infty <r$. Let us show that $\|G\|_\infty\le r-s$. Suppose that $\|G\|_\infty>r-s$. Then putting $t:=r-\|G\|_\infty>0$, we have
$t<s$. The terms of the sequence $G+\mathbf t$ have the values in $[t,r]\subset I$ where 
$\mathbf t:=(t,t,\dots)$. Hence $G+\mathbf t\in Z$. Also, $t<s$ implies that
$$G+\mathbf t\in B(G,s)\subset B(\mathbf 0, r), \mbox{ thus }\|G+\mathbf t\|_\infty < r.$$
On the other hand,
$$\|G+\mathbf t\|_\infty=\|G\|_\infty +t=r,$$
by the choice of $t$. A contradiction.

Now, define 
$$\mathbf c:=((r-s)/2,(r-s)/2,\dots).$$
Then $\mathbf c\in M$,
 and $\|G\|_\infty \le r-s$ implies that $\| G-\mathbf c\|_\infty\le (r-s)/2$.  
Assume that $s>r/3$. Then $(r-s)/2<s$ which implies that $\mathbf c\in M\cap B(G,s)$. Hence $B(G,s)$ is not disjoint from $M$. Consequently, $\gamma({\mathbf 0},r,M)\le r/3$. Since $\mathbf 0\in\Const\subset M$,
it follows that $\underline{p}_Z(M)\le\underline{p}_Z(M,\mathbf 0)\le 2/3$ as desired.
\end{proof}

In the following lemma we use \cite[Example 3.2]{Ali}.

\begin{lm} \label{L1}
Let $a_i:=1/i$ for $i\in\N$. Fix $\alpha\in(0,1/2)$ and $s\colon\mathbb N\to\{ -1,1\}$. Define $f^{\alpha, s}_n\colon I\to\mathbb R$ by
\[
f_n^{\alpha,s}:=\alpha\sum_{i=1}^ns(i)\chi_{\{a_i\}}\mbox{ for }n\in\mathbb N .
\]
Then the sequence $F^{\alpha,s}:=(f^{\alpha,s}_n)_{n\in\mathbb N}$ is equi-$B_1$. Also, it is pointwise
convergent to a Baire-one function, but it is not uniformly convergent.
Moreover, there is a gauge $\delta\colon I\to\mathbb R_+$ such that for any distinct $x,y\in I$, if 
$|x - y| < \min\{\delta(x), \delta(y)\}$, then $f_n^{\alpha,s}(x)=f_n^{\alpha,s}(y)=0$ for all $n\in\mathbb N$.
Also, $\delta$ does not depend on $\alpha$ and $s$.
\end{lm}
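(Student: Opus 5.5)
The plan is to construct the gauge $\delta$ explicitly first, because the equi-$B_1$ property follows from it at once. The set $A:=\{a_i\colon i\in\N\}=\{1/i\colon i\in\N\}$ has $0$ as its only accumulation point, and $0\notin A$. So every point of $I\setminus\{0\}$ is either an isolated point of $A$ or has a neighbourhood missing $A$. For $x=a_i$ we set
\[
\delta(a_i):=\tfrac12\bigl(a_i-a_{i+1}\bigr)=\frac{1}{2i(i+1)},
\]
which makes the intervals $(a_i-\delta(a_i),a_i+\delta(a_i))$ pairwise disjoint and shows they contain no other point of $A$. For $x\in(0,1]\setminus A$ we let $\delta(x)$ be half the distance from $x$ to the closed set $A\cup\{0\}$, which is positive. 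We put $\delta(0):=1$. This $\delta$ depends only on the points $a_i$, so it is independent of $\alpha$ and $s$.

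Next we verify the ``moreover'' property: for distinct $x,y$ with $|x-y|<\min\{\delta(x),\delta(y)\}$, neither point lies in $A$. We split into cases.
\begin{itemize}
\item If $x=a_i$, then $y\neq x$ satisfies $|x-y|<\delta(a_i)\le\frac12(a_i-a_{i+1})$, so $y\notin A$. We also need $y\ne0$, which holds since $\delta(a_i)<a_i$. Then $|x-y|<\delta(y)\le\frac12\operatorname{dist}(y,A)\le\frac12|y-a_i|$, a contradiction. Hence $x\notin A$, and symmetrically $y\notin A$.
\item If $x=0$ and $y=a_i$, the previous case already excludes it.
\end{itemize}
Since every $f_n^{\alpha,s}$ vanishes off $A$, both values $f_n^{\alpha,s}(x)$ and $f_n^{\alpha,s}(y)$ are $0$. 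I expect this case analysis near $0$ and near the points $a_i$ to be the only delicate spot, and the bound $\delta(y)\le\frac12\operatorname{dist}(y,A)$ handles it.

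The equi-$B_1$ property now follows: for every $\varepsilon>0$ the same $\delta$ works, because close pairs give differences $0<\varepsilon$, and $x=y$ is trivial.

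For pointwise convergence we note the following. At $x=a_i$ the sequence $f_n^{\alpha,s}(a_i)$ equals $\alpha s(i)$ for $n\ge i$, and off $A$ it is $0$. So the limit is $f=\alpha\sum_{i}s(i)\chi_{\{a_i\}}$. This $f$ is Baire-one: it is nonzero only on a countable closed-discrete-in-$(0,1]$ set, hence continuous except on the countable set $A$, and a function on $I$ with countably many discontinuities is Baire-one. Alternatively, one can apply the Lee--Tang--Zhao characterization with the same $\delta$. Finally, the convergence is not uniform, since
\[
\sup_x|f_n^{\alpha,s}(x)-f(x)|\ge|f(a_{n+1})|=\alpha>0
\]
for every $n$. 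The terms take values in $[-\alpha,\alpha]$, and $\alpha<1/2$ is not needed for these claims.
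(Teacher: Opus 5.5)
Your proof is correct and follows the paper's route. Your gauge $\delta(x)=\frac12\,\mathrm{dist}(x,A\cup\{0\})$ off $A$ is exactly the paper's $\frac12\min\{x-a_{i+1},a_i-x\}$ on $(a_{i+1},a_i)$, and the case analysis, the identification of the limit and the non-uniformity estimate all match. One small slip: the limit $f$ is also discontinuous at $0$, since $a_i\to 0$ and $|f(a_i)|=\alpha$; so its discontinuity set is $A\cup\{0\}$ rather than $A$, which is still countable and leaves your conclusion unaffected.
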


\begin{proof}
Clearly, the functions $f_n^{\alpha,s}$, $n\in\mathbb N$, are Baire-one.
Define a function $\delta\colon I \to \mathbb{R}^+$ as follows
$$
\delta(x) = 
\begin{cases} 
\frac{1}{2} (a_{i} - a_{i+1}), & \text{if } x = a_i, \quad i\in\mathbb N, \\ 
\frac{1}{2} \min \left\{x - a_{i+1}, a_{i} - x \right\}, & \text{if } x \in (a_{i+1}, a_i), \quad i\in\mathbb N, \\
1, & \text{otherwise.} 
\end{cases}
$$
Assume that $x, y\in I$ and 
\begin{equation} \label{eq1}
|x - y| < \min\{\delta(x), \delta(y)\}.
\end{equation} 
First suppose that $x\in (0,1]$. Consider two cases.

If there exists $i\in\mathbb N$ such that $x = a_i$,  then we analyse various locations of $y\neq x$ and we deduce that $\delta(y) \le (1/2)\mid a_{i} - y\mid \,
= (1/2)\mid x - y\mid<\delta(x)$. So, by (\ref{eq1}), we get $y = a_i$.

If there exists $i\in\mathbb N$ such that $x\in (a_{i+1}, a_i)$, then we obtain $y\in (a_{i+1}, a_i)$. In this case, for each $n\in\mathbb N$ we have $f_n^{\alpha,s}(x) = f_n^{\alpha,s}(y) = 0$, so $\mid f_n^{\alpha,s}(x) - f_n^{\alpha,s}(y)\mid \, = 0 < \varepsilon$ for each $\varepsilon >0$.

The proof when $y\in(0,1]$ is analogous. The case $x=y=0$ is trivial. Summing up, we obtain that 
$F^{\alpha,s}$ is equi-$B_1$. Moreover, the gauge $\delta$ is as desired.

Define $f^{\alpha,s}\colon I \to \mathbb R$ by
\begin{equation} \label{lab}
f^{\alpha,s}: =\alpha\sum_{i=1}^\infty s(i)\chi_{\{a_i\}}.
\end{equation} 
Then $f^{\alpha,s}$ is Baire-one (since the set of discontinuity points is countable) and
$$\lim_{n \to \infty} f_n^{\alpha,s}(x) = f^{\alpha,s}(x) \quad \text{for each } x \in I.$$
On the other hand, for each $n\in\mathbb N$ we have
$$\sup_{x \in I} |f_n^{\alpha,s}(x) - f^{\alpha,s}(x)| = \alpha>0,$$
so $F^{\alpha,s}$ is not uniformly convergent. 
\end{proof}

\begin{remark} \label{rema}
Assume that $F=(f_n)_{n\in\mathbb N}$ and $G=(g_n)_{n\in\mathbb N}$ are two sequences of equi-$B_1$ functions from $I$ to $\mathbb R$. Then $F+G$ is also equi-$B_1$ family. Indeed, given $\varepsilon >0$ we pick the respective gauges $\delta_F,\delta_G\colon I\to\mathbb R_+$ for $\varepsilon/2$ and $F,G$, respectively. Then 
$\delta:=\min\{\delta_F,\delta_G\}$ is good for $F+G$ and $\varepsilon$. Similarly, we find the respective gauge for $\varepsilon$ and an arbitrary $c\in\mathbb R$ to show that $c F$ forms an equi-$B_1$ family. 
\end{remark}

The following result is an improvement of \cite[Theorem 3.3]{Ali}.

\begin{theorem} \label{T1}
The family $\mathcal{F}_u(I)$ is lower $2/3$-porous in $\mathcal{F}_{eq}(I)$.
\end{theorem}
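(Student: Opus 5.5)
We need two inequalities: $\underline{p}_Z(M)\le 2/3$ and $\underline{p}_Z(M,F)\ge 2/3$ for every $F\in M$, where $M=\mathcal F_u(I)$ and $Z=\mathcal F_{eq}(I)$.

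\textbf{Upper bound.} This comes from Lemma \ref{L_e}, so we only check its hypotheses. Constant sequences converge uniformly, so $\Const\subset\mathcal F_u(I)\subset\mathcal F_{eq}(I)$. Let $F\in\mathcal F_{eq}(I)$ and $H\in\Const$ with $F+H$ taking values in $I$. Then $F+H$ is equi-$B_1$, since the gauge of $F$ works unchanged (or by Remark \ref{rema}). It is also pointwise convergent, so $F+H\in\mathcal F_{eq}(I)$. Lemma \ref{L_e} then gives $\underline p_Z(M)\le 2/3$.

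\textbf{Lower bound.} Fix $F=(f_n)\in\mathcal F_u(I)$ and a small $r>0$; write $f$ for the uniform limit. For $\beta<1/3$ close to $1/3$ we want a centre $G\in\mathcal F_{eq}(I)$ with
\[
B(G,\beta r)\subset B(F,r)\setminus\mathcal F_u(I).
\]
This gives $\gamma(F,r,M)\ge \beta r$, hence $\underline p(M,F)\ge 2\beta\to 2/3$.

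The natural candidate is $G=F+F^{\alpha,s}$ with $\alpha\approx 2r/3$ and $s$ chosen from Lemma \ref{L1}. The bump $\alpha s(i)\chi_{\{a_i\}}$ is switched on only at step $n\ge i$, so the oscillation in $n$ at each point $a_i$ is about $\alpha$. If $H$ is within $\beta r<\alpha/2$ of $G$ and $H$ converged uniformly, then $\|h_n-h_m\|_\infty<\eta$ for large $n,m$. But $g_n(a_i)-g_m(a_i)$ equals $f_n(a_i)-f_m(a_i)\pm\alpha$ for $m<i\le n$. Since $f_n\to f$ uniformly, for large $n,m$ this is close to $\pm\alpha$. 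So $|h_n(a_i)-h_m(a_i)|\ge \alpha-2\beta r-o(1)>0$, a contradiction. Hence $B(G,\beta r)\cap\mathcal F_u(I)=\emptyset$.

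Membership $G\in\mathcal F_{eq}(I)$ follows from Remark \ref{rema} and Lemma \ref{L1}, provided $F$ itself is equi-$B_1$. That holds because $\mathcal F_u(I)\subset\mathcal F_{eq}(I)$ by (\ref{inclu}). Pointwise convergence of $G$ is clear. The inclusion in $B(F,r)$ holds because $\|G-F\|_\infty+\beta r=\alpha+\beta r<r$ once $\alpha<(1-\beta)r\approx 2r/3$. Choosing $\alpha$ strictly between $2\beta r$ and $(1-\beta)r$ is possible exactly because $\beta<1/3$.

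\textbf{Main obstacle: staying inside $I$.} $G$ must take values in $[0,1]$, and $F$ may touch $0$ or $1$. This is where $s$ is used: set $s(i)=-1$ when $f(a_i)\ge1/2$ and $s(i)=+1$ otherwise. For small $r$ the bumps then point inward, at least for large $n$ where $f_n$ is uniformly close to $f$.

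For the finitely many early $n$ there are two options. One is to delay the bumps by a shifted index, turning on bump $i$ at step $n=i+N$; the equi-$B_1$ gauge from Lemma \ref{L1} still works since it does not depend on the indexing. The other is to replace $F$ first by its truncation toward the interior, shrinking it by a factor near $1$. Either way the estimates above are only perturbed by $o(r)$, which is absorbed by taking $\beta$ slightly smaller, and $\beta\to1/3$ still yields lower porosity exactly $2/3$.
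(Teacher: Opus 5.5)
Your proposal is correct and follows the paper's proof: Lemma~\ref{L_e} gives the upper bound, and the lower bound uses $G=F+F^{\alpha,s}$ with $s(i)$ chosen by comparing $f(a_i)$ with $1/2$ and the bumps suppressed at early indices (the paper sets $g_n=f_n$ for $n<n_0$, which amounts to your ``delay'' option); the $\alpha$-oscillation at the points $a_i$ then rules out uniform convergence for every $H$ in the smaller ball. Only the delay option is needed, and it is fine; the alternative of shrinking $F$ toward the interior is not an $o(r)$ perturbation, because making room for $\alpha$-bumps at early steps forces a displacement of order $\alpha\approx 2r/3$.
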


\begin{proof}
Using Lemma \ref{L_e} with $M:=\mathcal F_u(I)$ and $Z:=\mathcal F_{eq}(I)$, we obtain 
$\underline{p}_{{\mathcal F}_{eq}(I)}(\mathcal F_u(I))\le 2/3$.

It suffices to prove that $\mathcal{F}_u(I)$ is at least lower $2/3$-porous in $\mathcal{F}_{eq}(I)$. To accomplish this, fix a sequence $F \in \mathcal{F}_u(I)$, $F = (f_n)_{n \in \mathbb{N}}$. 
Assume that $r \in (0, 1/2)$ and $\varepsilon \in (0, r/3)$ are arbitrary. Let $f \in B_1(I, I)$ be such that
$$\lim_{n \to \infty} f_n(x) = f(x) \quad \text{for each } x \in I.$$
Since $F\in\mathcal F_u(I)$, pick $n_0$ such that for each $n \ge n_0$ we have
\begin{equation} \label{T1_2}
\sup_{x\in I}\mid f_n(x) - f(x)\mid\, < \varepsilon.
\end{equation}
Let us use Lemma \ref{L1}.
Put $\alpha:=2r/3$ and define $s\colon\mathbb N\to\{-1,1\}$ as follows:
$$s(i):=\begin{cases}
1, & \text{if } f(a_i)\le \frac{1}{2}, \\ 
-1, & \text{if } f(a_i) >\frac{1}{2}. \\ 
\end{cases}
$$
Let $(f_n^{\alpha,s})_{n \in \mathbb{N}}$ be as in Lemma \ref{L1}. 

For each $n \in \mathbb{N}$, we define $g_n\colon I\to \mathbb R$ as follows.
Let $g_n:=f_n$ for $n< n_0$, and $g_n:=f_n+f_n^{\alpha,s}$ for $n\ge n_0$. One can check that the values of $g_n$ are in $I$.
Indeed, let $n\ge n_0$ and consider a fixed $i\le n$. Suppose that $f(a_i)\le 1/2$. Then 
$$g_n(a_i)=f_n(a_i)+\alpha<f(a_i)+\varepsilon+1/3\le 1/2+1/6+1/3=1$$
since $\alpha<2r/3<1/3$ and $\varepsilon<r/3<1/6$. The remaining cases are similar.
Observe that $(g_n)_{n\in\mathbb N}$ is pointwise convergent to the Baire-one function $f+f^{\alpha,s}$ where $f^{\alpha,s}$ is given by (\ref{lab}).

Let us prove that $G:=(g_n)_{n\in\mathbb N} \in \mathcal{F}_{eq}(I)$.
For any $\eta>0$ choose a gauge $\delta_0$ witnessing that $(f_n)_{n\in\mathbb N}$ is an equi-$B_1$ sequence, and let $\delta$ be the respective gauge chosen for $(f_n^{\alpha,s})_{n\in\mathbb N}$ in Lemma \ref{L1}. Observe that the gauge $\sigma:=\min\{\delta_0,\delta\}$ is good for $G$. Indeed, let
$x,y\in I$ be distinct such that $\mid x-y\mid \,<\min\{\sigma(x),\sigma(y)\}$. Then
$\mid x-y\mid \,<\min\{\delta(x),\delta(y)\}$ which (by Lemma \ref{L1}) implies that 
$f_n^{\alpha,s}(x)=f_n^{\alpha,s}(y)=0$ for all $n\in\mathbb N$. Since also
$\mid x-y\mid \,<\min\{\delta_0(x),\delta_0(y)\}$, we have
$\mid g_n(x)-g_n(y)\mid\, = \, \mid f_n(x)-f_n(y)\mid\,
 < \eta$ for all $n\in\mathbb N$, as desired.

Note that $\|G-F\|_\infty=\alpha$, hence $B(G,r/3)=B(G,\alpha/2)\subset B(F,r)$.

We will show that $B(G, \alpha/2 - \varepsilon) \cap \mathcal{F}_u(I) = \emptyset$. So, fix $H \in B(G, \alpha/2 - \varepsilon) \cap \mathcal{F}_{eq}(I)$ where $H = (h_n)_{n \in \mathbb{N}}$ is pointwise convergent to $h\colon I\to I$. 
Fix any $k,n\in\mathbb N$, $k>n\ge n_0$. We have
 $$\alpha\,=\,\mid f^{\alpha,s}_k(a_{n+1}) - f^{\alpha,s}_n(a_{n+1})\mid\;
 \le\; \mid g_k(a_{n+1})-g_n(a_{n+1})\mid\,+\,\mid f_k(a_{n+1})-f_n(a_{n+1})\mid$$
 $$\le\; \mid h_k(a_{n+1})-h_n(a_{n+1})\mid\, + \, 2\|H-G\|_\infty\, +\, \mid f_k(a_{n+1})-f_n(a_{n+1})\mid .$$
 Taking $k\to\infty$ we get
 $$\alpha\le\; \mid h(a_{n+1}) -h_n(a_{n+1})\mid\, +\, \mid f(a_{n+1})-f_n(a_{n+1})\mid\, +\, 
 2\|H-G\|_\infty.$$
 Hence by (\ref{T1_2}) we obtain for each $n\ge n_0$,
 $$\sup_{x\in I}\mid h(x)-h_n(x)\mid\;\ge\;\alpha-2\|H-G\|_\infty-\sup_{x\in I}\mid f(x)-f_n(x)\mid\;\ge\alpha-\alpha+2\varepsilon -\varepsilon=\varepsilon.$$
 Thus $H \notin \mathcal{F}_u(I)$. 
 
Observe that our choice of $G$ depends on $\varepsilon$. So, in fact $G= G_\varepsilon$. Consequently, 
 $$B(G_\varepsilon, \alpha/2-\varepsilon) \cap \mathcal{F}_{eq}(I) \subset B(F,r)\cap\mathcal{F}_{eq}(I) \setminus \mathcal{F}_u(I)$$ 
 which implies that for each $F\in\mathcal F_u(I)$ we have $\gamma(F,r,\mathcal F_{u}(I))\ge \alpha/2-\varepsilon$ for every small $\varepsilon >0$. Hence $\gamma(F,r,\mathcal F_{u}(I))\ge\alpha/2=r/3$.
 Now, it follows that
$$\underline{p}_{{\mathcal F}_{eq}(I)}(\mathcal{F}_u(I), F) = 2 \liminf_{r \to 0^+} \frac{\gamma(F, r, \mathcal{F}_u(I))}{r} \ge \frac{2}{3}.$$
Finally, $\underline{p}_{{\mathcal F}_{eq}(I)}(\mathcal F_u(I))\ge 2/3$.
\end{proof}

Let $\mathbb Q$ stand for the set of all rationals. 

\begin{lm} \label{L2}
Let $F=(f_n)_{n\in\mathbb N}\in\mathcal F_{eq}(I)$ and fix an enumeration $(r_n)_{n\in\mathbb N}$ of $\mathbb Q\cap I$. Assume that $c>\beta>0$. If $H=(h_n)_{n\in\mathbb N}\in\mathcal F(I)$ is such that for all $n\in\mathbb N$
$$\mid h_n(r_n)-f_n(r_n)\mid\,\ge c ,\mbox{ and } \mid h_n(x)-f_n(x)\mid\,\le\beta\mbox{ if } x\in I\setminus\{r_n\},$$
then $H$ is not equi-$B_1$.
\end{lm}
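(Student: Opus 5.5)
We argue by contradiction. Suppose $H$ is equi-$B_1$ and put $D_n:=h_n-f_n$ for $n\in\mathbb N$. By Remark \ref{rema}, the sequence $(D_n)_{n\in\mathbb N}=H+(-1)F$ is equi-$B_1$, being a combination of two equi-$B_1$ sequences. Set $\varepsilon:=c-\beta>0$ and let $\delta\colon I\to\mathbb R^+$ be a gauge for $(D_n)_{n\in\mathbb N}$ and $\varepsilon$. Then for all $n\in\mathbb N$ and all $x,y\in I$ with $|x-y|<\min\{\delta(x),\delta(y)\}$ we have $|D_n(x)-D_n(y)|<\varepsilon$.

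We now use the hypotheses at the special point $r_n$. For every $n$ and every $y\neq r_n$ we have $|D_n(r_n)|\ge c$ and $|D_n(y)|\le\beta$, so
$$|D_n(r_n)-D_n(y)|\ge c-\beta=\varepsilon.$$
Hence for each $n$ and each $y\ne r_n$ we must have $|r_n-y|\ge\min\{\delta(r_n),\delta(y)\}$. The goal is to produce $n$ and $y\neq r_n$ with $|r_n-y|<\min\{\delta(r_n),\delta(y)\}$, which gives the contradiction.

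The key step, and the main obstacle, is choosing such a pair. Since $\delta$ is an arbitrary positive function, neither value is bounded below uniformly, so we use a Baire category argument. Let $E_k:=\{x\in I:\delta(x)>1/k\}$. Then $I=\bigcup_k E_k$, so some closure $\cl E_k$ has nonempty interior and contains an open interval $J$. Since $E_k$ is dense in $J$, we can pick $x\in E_k\cap J$ and then $y\in E_k\cap J$ with $0<|x-y|<1/(2k)$, shrinking to a subinterval of $J$ of length less than $1/(2k)$ if needed.

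The remaining difficulty is that we need one of the two points to be some $r_n$, while we only know $\delta>1/k$ on $E_k$, not on the rationals. We handle this as follows. Take a subinterval $J'\subset J$ of length less than $1/(2k)$ and a rational $q\in J'$. Since $(r_n)_{n\in\mathbb N}$ enumerates $\mathbb Q\cap I$, we have $q=r_n$ for some $n$. The point $q$ satisfies $\delta(q)>0$, so choose $y\in E_k\cap J'$ with $0<|q-y|<\min\{\delta(q),1/k\}$, which is possible because $E_k$ is dense in $J'$. Then
$$|q-y|<\min\{\delta(q),\delta(y)\}=\min\{\delta(r_n),\delta(y)\},$$
since $\delta(y)>1/k$, and this contradicts the inequality above. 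So the argument only needs density of $E_k$ near one rational together with positivity of $\delta(q)$.

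In fact the Baire step can be avoided entirely. Fix any rational $q=r_n$ and consider the points $y$ with $0<|y-q|<\delta(q)$. If each such $y$ had $\delta(y)\le|y-q|$, we would get no contradiction at this $q$ alone. So it is cleaner to keep the Baire argument to guarantee $\delta(y)>1/k$ for some nearby $y$, and that is the version I would write out.
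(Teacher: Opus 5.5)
Your argument is correct and is essentially the paper's proof. It uses the same Baire category step on the sets $\{\delta>1/k\}$, then picks a rational $r_n$ in the interior of $\cl\{\delta>1/k\}$ and a point $y$ of that set with $0<|y-r_n|<\min\{\delta(r_n),1/k\}$. The only difference is that you pass to $D_n=h_n-f_n$ via Remark \ref{rema} and use one gauge with $\varepsilon=c-\beta$, whereas the paper takes $\min\{\delta_F,\delta_H\}$ with $\varepsilon<(c-\beta)/2$ and the triangle inequality; the two are equivalent.

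You should delete your final paragraph: its claim that the Baire step can be avoided is unsupported, and you retract it yourself.
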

\begin{proof}
Suppose that $H$ is equi-$B_1$. Choose $\varepsilon\in (0,(c-\beta)/2)$ and pick the respective gauges 
$\delta_F$ and $\delta_H$ for $\varepsilon$ witnessing that $F$ and $H$ are equi-$B_1$ sequences.
Let $\delta:=\min\{ \delta_F,\delta_H\}$. For  each $k\in\mathbb N$ let 
$A_k:=\{ x\in I\colon \delta(x)>1/k\}$. Since $I=\bigcup_{k\in\mathbb N} {\cl}(A_k)$, 
there is $k\in\mathbb N$ such that the interior $U$ of ${\cl}(A_k)$ is nonempty, by the Baire Category Theorem. Choose $r_n\in U$ and $x\in A_k\cap U\setminus\{ r_n\}$ such that 
$|x-r_n|<\min\{\delta(r_n),1/k\}$. Then $|x-r_n|<\min\{\delta(x),\delta(r_n)\}$ and so,
$|f_n(x)-f_n(r_n)|<\varepsilon$. Consequently, by the assumptions,
$$|h_n(r_n)-h_n(x)|\ge |h_n(r_n)-f_n(r_n)|-|f_n(r_n)-f_n(x)|-|f_n(x)-h_n(x)|
\ge c-\varepsilon-\beta>\varepsilon,$$
a contradiction.
\end{proof}

Our next result improves Theorem 3.6 in \cite{Ali}.

\begin{theorem}
The family $\mathcal F_{eq}(I)$ is lower $2/3$-porous in $\mathcal F_{p.w.}(I)$.
\end{theorem}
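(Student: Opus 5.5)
The upper bound comes directly from Lemma \ref{L_e} with $M:=\mathcal F_{eq}(I)$ and $Z:=\mathcal F_{p.w.}(I)$. Constant sequences are equi-$B_1$ and converge, so $\Const\subset M\subset Z$. Adding a constant sequence preserves pointwise convergence and keeps a Baire-one limit. This gives $\underline p_{\mathcal F_{p.w.}(I)}(\mathcal F_{eq}(I))\le 2/3$. It remains to show $\gamma(F,r,\mathcal F_{eq}(I))\ge r/3$ for every $F\in\mathcal F_{eq}(I)$ and small $r$.

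Fix $F=(f_n)\in\mathcal F_{eq}(I)$ with limit $f$, fix $r\in(0,1/2)$, and an enumeration $(r_n)$ of $\mathbb Q\cap I$. Put $\alpha:=2r/3$. Choose signs $s(n):=1$ if $f_n(r_n)\le 1/2$ and $s(n):=-1$ otherwise. Define
\[
g_n:=f_n+\alpha s(n)\chi_{\{r_n\}} .
\]
Then $g_n$ is Baire-one and takes values in $I$, since $\alpha<1/3$ and the sign pushes the value away from the nearer endpoint.

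To see $G=(g_n)\in\mathcal F_{p.w.}(I)$: for fixed $x$, we have $x=r_n$ for at most one $n$. Hence $g_n(x)=f_n(x)$ for all large $n$, and $G$ converges pointwise to the Baire-one function $f$. Also $\|G-F\|_\infty=\alpha$, so $B(G,\alpha/2)\subset B(F,r)$. Unlike the proof of Theorem \ref{T1}, no $\varepsilon$-tail is needed here, because the perturbation does not depend on the limit.

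The key step is to show $B(G,\alpha/2)\cap\mathcal F_{eq}(I)=\emptyset$, using Lemma \ref{L2}. Take $H=(h_n)\in\mathcal F(I)$ with $\|H-G\|_\infty<\alpha/2$, and set $\beta:=\|H-G\|_\infty$. For $x\neq r_n$,
\[
|h_n(x)-f_n(x)|=|h_n(x)-g_n(x)|\le\beta .
\]
At $x=r_n$,
\[
|h_n(r_n)-f_n(r_n)|\ge\alpha-\beta=:c .
\]
Since $\beta<\alpha/2$, we get $c>\beta$. If $\beta>0$, Lemma \ref{L2} applies directly and shows $H$ is not equi-$B_1$. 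If $\beta=0$, replace $\beta$ by any small positive number below $c$; the hypotheses of Lemma \ref{L2} still hold, so the same conclusion follows. Hence $H\notin\mathcal F_{eq}(I)$, which gives $\gamma(F,r,\mathcal F_{eq}(I))\ge\alpha/2=r/3$ and therefore $\underline p(\mathcal F_{eq}(I),F)\ge 2/3$.

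The main obstacle is exactly this application of Lemma \ref{L2}. One needs the strict inequality $c>\beta$ uniformly in $n$, and $\beta$ must be the sup-distance rather than a pointwise bound. The remaining checks are routine: that $G$ maps into $I$, and that the limit of $G$ is Baire-one.
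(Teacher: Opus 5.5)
Your proof is correct and follows the paper's argument. You use the same one-point perturbation $g_n=f_n\pm\alpha\chi_{\{r_n\}}$ at the $n$-th rational, Lemma \ref{L2} to show that $B(G,\alpha/2)$ contains no equi-$B_1$ sequence, and Lemma \ref{L_e} for the upper bound. The only difference is cosmetic: you take $\beta:=\|H-G\|_\infty$ directly, whereas the paper works in $B(G,\alpha/2-\varepsilon)$ with $c=\alpha/2+\varepsilon$ and $\beta=\alpha/2-\varepsilon$, and then lets $\varepsilon\to 0$.
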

\begin{proof}
At first let us show that  $\mathcal F_{eq}(I)$ is at least lower $2/3$-porous in $\mathcal F_{p.w.}(I)$. For this purpose consider a sequence $F\in \mathcal F_{eq}(I)$, $F = (f_n)_{n\in\mathbb N}$, and let
$f_n\to f$ pointwise on $I$. Fix numbers $r\in(0,1/2)$, $\varepsilon\in\left(0,r/3\right)$. 
Set $\alpha:=2r/3$ and fix an enumeration 
$(r_n)_{n\in\mathbb N}$ of $\mathbb Q\cap I$.
Then for each $n\in\mathbb N$ define $g_n\colon I\to \mathbb R$ as follows
\[
g_n(x):=\left\{
\begin{array}{lll}
f_n(r_n)+\alpha, & \mbox{if } &  x=r_n,\;   f_n(r_n)\le 1/2,\\
f_n(r_n)-\alpha, & \mbox{if } & x=r_n, \; f_n(r_n) >1/2,\\
f_n(x),          & \mbox{if } & x\neq r_n. \\ 
\end{array} \right.
\] 
Note that $g_n[I]\subset I$ and $g_n$ is Baire-one as a modification of $f_n$ in at most one point. Also, $g_n\to f$ pointwise. Hence 
$G:=(g_n)_{n\in\mathbb N}\in\mathcal F_{p.w.}(I)$. Note that $\|G-F\|_\infty=\alpha$.

Now, fix $H:=(h_n)_{n\in\mathbb N}$ in $B(G,\alpha/2 - \varepsilon)\cap \mathcal F_{p.w.}(I)$. We will show that $B(G, \alpha/2 - \varepsilon) \cap \mathcal{F}_{eq}(I) = \emptyset$. For 
each $n\in\mathbb N$ we have
$$|h_n(r_n)-f_n(r_n)|\ge |f_n(r_n)-g_n(r_n)|-|g_n(r_n)-h_n(r_n)|\ge \alpha-\alpha/2+\varepsilon=\alpha/2+\varepsilon$$
and
$$|h_n(x)-f_n(x)|=|h_n(x)-g_n(x)|<\alpha/2-\varepsilon\mbox{ if }x\in I\setminus\{r_n\}.$$
Now, we can use Lemma \ref{L2} with $c:=\alpha/2+\varepsilon$ and $\beta:=\alpha/2-\varepsilon$.
Hence $H\notin\mathcal F_{eq}(I)$.
Since $\varepsilon$ can be arbitrarily small, we get $B(G,\alpha/2)\cap \mathcal F_{eq}(I)=\emptyset$.

Now, we can repeat the analogous reasoning from the proof of Theorem \ref{T1}.
We observe that $B(G,r/3)=B(G,\alpha/2)\subset B(F,r)$ and then it follows that 
$\gamma(F,r,\mathcal F_{eq}(I))\ge r/3$. Hence
$$\underline{p}_{{\mathcal F}_{p.w.}(I)}\left(\mathcal F_{eq}(I),F\right)
=2\liminf\limits_{r\rightarrow 0^{+}}\frac{\gamma\left(F,r,\mathcal F_{eq}(I)\right)}{r}\ge \frac{2}{3}.
$$
Thus $\mathcal F_{eq}(I)$ is at least lower $2/3$-porous in $\mathcal F_{p.w.}(I)$. 

To complete the proof, we use Lemma \ref{L_e} with $M:=\mathcal F_{eq}(I)$ and $Z:=\mathcal F_{p.w.}(I)$, and we show that $\mathcal F_{eq}(I)$ is lower $2/3$-porous in $\mathcal F_{p.w.}(I)$.
\end{proof}

\begin{theorem} \label{TTw}
The family $\mathcal F_{p.w.}(I)$ is lower $2/3$-porous in $\mathcal F(I)$.
\end{theorem}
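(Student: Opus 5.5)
The upper bound $\underline{p}_{\mathcal F(I)}(\mathcal F_{p.w.}(I))\le 2/3$ comes directly from Lemma \ref{L_e} with $M:=\mathcal F_{p.w.}(I)$ and $Z:=\mathcal F(I)$. Constant sequences converge to a constant, hence Baire-one, limit, so they lie in $M$. Adding a constant sequence to a pointwise convergent sequence of Baire-one functions keeps it in $\mathcal F(I)$, provided the values stay in $I$. So it remains to prove that $\mathcal F_{p.w.}(I)$ is at least lower $2/3$-porous, and for this I follow the scheme of Theorem \ref{T1}.

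Fix $F=(f_n)_{n\in\mathbb N}\in\mathcal F_{p.w.}(I)$ with Baire-one limit $f$, and fix $r\in(0,1/2)$, $\varepsilon\in(0,r/3)$, $\alpha:=2r/3$. I will build $G\in\mathcal F(I)$ with $\|G-F\|_\infty\le\alpha$ whose pointwise limit is "far from Baire-one". Choose a set that makes $\chi_A$ badly non-Baire-one, namely $A=\mathbb Q\cap I$; its characteristic function is a Baire-two function which is not Baire-one. Write $\mathbb Q\cap I=\bigcup_k Q_k$ with finite increasing $Q_k$. Put
\[
g_n:=f_n+\alpha\, s_n\chi_{Q_n},
\]
where the sign $s_n(x)\in\{-1,1\}$ is chosen to keep values in $I$. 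To make the signs stable in $n$ (so that $G$ converges pointwise), take the sign according to $f(x)\le 1/2$ or $f(x)>1/2$, as in Theorem \ref{T1}. The bound $g_n[I]\subset I$ then holds for $n\ge n_0(x)$ at each point, and this is the one technical nuisance. To handle it, I will instead define $g_n(x)$ for $x\in Q_n$ as $f_n(x)\pm\alpha$ truncated to $I$. Cleaner still, use the sign rule with threshold $1/2$ applied to $f_n(x)$ and argue convergence directly. Each $g_n$ differs from $f_n$ at finitely many points, so it is Baire-one, and $g_n\to g:=f+\alpha s\chi_{\mathbb Q\cap I}$ pointwise. Thus $G\in\mathcal F(I)$ and $B(G,\alpha/2)\subset B(F,r)$.

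The main step is to show $B(G,\alpha/2-\varepsilon)\cap\mathcal F_{p.w.}(I)=\emptyset$. Take $H$ in this ball, converging pointwise to $h$; then $\|h-g\|_\infty\le\alpha/2-\varepsilon$. Suppose $h$ is Baire-one. Then $u:=(h-f)/\alpha$ is Baire-one, and $|u-s\chi_{\mathbb Q}|\le 1/2-\varepsilon/\alpha$ everywhere. Hence $|u|>\varepsilon/\alpha$ on the rationals, while $|u|<1/2$ and in fact $|u|\le 1/2-\varepsilon/\alpha$ on the irrationals. Using $|u|$, which is Baire-one, we see that $\mathbb Q\cap I=\{|u|\ge 1/2+\varepsilon/\alpha\}$ separated from the irrationals by a gap. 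Then $\mathbb Q\cap I=\{x: |u(x)|>1/2\}$ is an $F_\sigma$ set, and $\mathbb Q\cap I=\{|u|\ge 1/2+\varepsilon/\alpha\}$ is a $G_\delta$ set (a closed-level set of a Baire-one function is $G_\delta$). This contradicts the Baire Category Theorem, since $\mathbb Q\cap I$ is a dense, countable $G_\delta$. Therefore $h$ is not Baire-one and $H\notin\mathcal F_{p.w.}(I)$.

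I expect the main obstacle to be the sign bookkeeping that keeps the values in $I$ while preserving pointwise convergence, not the Baire argument. If the sign depends on $f_n(x)$ near $1/2$, it may oscillate. The remedy is to use $s(x)$ determined by $f(x)$ and then clip: set $g_n(x):=\min\{1,\max\{0,f_n(x)+\alpha s(x)\}\}$ for $x\in Q_n$. Clipping converges pointwise, and since $f_n\to f$, for large $n$ at a fixed $x$ the value is eventually $f_n(x)+\alpha s(x)$ exactly, up to the $\varepsilon$ slack. So the limit still satisfies $|g-f|\ge\alpha-\varepsilon$ on the rationals, which suffices after adjusting $\varepsilon$. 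Finally, as in Theorem \ref{T1}, letting $\varepsilon\to0$ gives $\gamma(F,r,\mathcal F_{p.w.}(I))\ge r/3$ and hence $\underline p\ge 2/3$.
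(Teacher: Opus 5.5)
Your proof is correct, but its key step follows a different route from the paper's. The construction differs first. The paper perturbs $f_n$ at $r_i$ only from an index $N_i$ on, chosen so that $|f_n(r_i)-f(r_i)|<\varepsilon$. This keeps the values in $I$ but makes $G=G_\varepsilon$ depend on $\varepsilon$. Your clipping, with $s$ determined by $f$, does the same job without reference to $\varepsilon$. Since $\alpha<1/3$ and $s$ is chosen from $f$, the clip is eventually inactive at each rational. So the limit is exactly $g=f+\alpha s\chi_{\mathbb Q\cap I}$, and no $\varepsilon$-slack or "adjusting $\varepsilon$" is needed.

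The argument that $B(G,\alpha/2-\varepsilon)$ misses $\mathcal F_{p.w.}(I)$ also differs. The paper argues locally: at each $x_0\in C(f)$ it compares a rational and an irrational point near $x_0$. This shows $h$ has oscillation at least $\varepsilon$ there, so $C(h)\cap C(f)=\emptyset$, which is impossible if both are dense $G_\delta$ sets. You argue globally: $u=(h-f)/\alpha$ would be Baire-one, with $|u|\ge 1/2+\varepsilon/\alpha$ on $\mathbb Q\cap I$ and $|u|\le 1/2-\varepsilon/\alpha$ off it. Then $\mathbb Q\cap I$ is a closed superlevel set of a Baire-one function, hence $G_\delta$, which Baire category forbids.

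Your route replaces density of continuity points with two facts: $B_1$ is closed under subtraction, and level sets of Baire-one functions are $F_\sigma$/$G_\delta$. It is a bit shorter and isolates exactly what is used: $f$ is Baire-one and $g-f$ is a separated multiple of $\chi_{\mathbb Q\cap I}$. The paper's version gives more local information, namely that $h$ is discontinuous at every continuity point of $f$, and it matches the oscillation style of the paper's other proofs.

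In a write-up, make three small fixes:
\begin{itemize}
\item Keep only the clipped sign scheme and drop the abandoned alternatives.
\item Replace ``$|u|>\varepsilon/\alpha$ on the rationals'' by $|u|\ge 1/2+\varepsilon/\alpha$.
\item Drop the $F_\sigma$ remark, since the $G_\delta$ conclusion alone gives the contradiction.
\end{itemize}
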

\begin{proof}
First let us show that $\mathcal{F}_{p.w.}(I)$ is at least lower $2/3$-porous in $\mathcal{F}(I)$. To this end, fix a sequence $F = (f_n)_{n\in\mathbb{N}} \in \mathcal{F}_{p.w.}(I)$ and any $r \in (0, 1/2)$.  
Let $f$ denote the pointwise limit of $(f_n)_{n\in\mathbb N}$. Since $f$ is Baire-one,  its set $C(f)$ of continuity points is dense in $I$.  

Let $\varepsilon \in (0, r/3)$ and set $\alpha: = 2r/3$. Let $(r_n)_{n\in\mathbb N}$ denote an enumeration of $\mathbb Q\cap I$. Define 
$s\colon \mathbb N\to\{-1,1\}$ as follows
$$s(i):=\begin{cases}
1, & \text{if } f(r_i)\le \frac{1}{2}, \\ 
-1, & \text{if } f(r_i) >\frac{1}{2}. \\ 
\end{cases}
$$
Since $f_n\to f$ pointwise on $I$, for each $i\in\mathbb N$ pick an index $N_i$ such that
$|f_n(r_i)-f(r_i)|<\varepsilon$ for each $n\ge N_i$. For any $n\in\mathbb N$ define
$$g_n:=
f_n+\alpha\sum\limits_{i\le n,\, N_i\le n}s(i)\chi_{\{r_i\}}.
$$
Let $G:=(g_n)_{n\in\mathbb N}$. We observe that $G\in\mathcal F(I)$.
Indeed, $g_n$ is Baire-one for each $n\in\mathbb N$, and for a fixed $k\in\mathbb N$ we have
$$g_n(r_k)=f_n(r_k)+\alpha s(k)\mbox{ for each }n\ge\max\{k,N_k\}.$$
Hence $g_n\to g$ pointwise
where $g:=f+\alpha\sum_{i=1}^\infty s(i)\chi_{\{r_i\}}$. Note that the values of $g_n$ are in $I$
since, if a point $r_i$ with $s(i)=1$ is fixed, for any $n\ge N_i$ we have
$$g_n(r_i)=f_n(r_i)+\alpha<f(r_i)+\varepsilon +1/3\le 1/2+1/6+1/3=1,$$
and similarly, if $s(i)=-1$. Also, note that $\|G-F\|_\infty=\alpha$.
This and $3\alpha/2=r$ imply that $B(G,\alpha/2-\varepsilon)\subset B(F,r)$.

We will show that $B(G,\alpha/2-\varepsilon)\cap\mathcal F_{p.w.}(I)=\emptyset$.
So, take $H \in B(G, \alpha/2 - \varepsilon) \cap \mathcal{F}(I)$ where $H = (h_n)_{n \in \mathbb{N}}$ is pointwise convergent to $h\colon I\to I$. 
Fix $x_0\in C(f)$ and pick a neighbourhood $V_0$ in $I$ of $x_0$ such that the oscillation of $f$ on $V_0$ is less than $\varepsilon$. 
Take any neighbourhood $V\subset V_0$ of $x_0$ and any $x\in V\cap\mathbb Q$, $x'\in V\setminus\mathbb Q$. Let $x=r_k$. Then for any 
$n\ge \max\{ k,N_k\}$ we have
$$g_n(x)-g_n(x') = f_n(x)+\alpha s(k)-f_n(x')$$
and
$$\mid g_n(x)-g_n(x')\mid\,\le\,\mid g_n(x)-h_n(x)\mid\, +\mid h_n(x)-h_n(x')\mid\, +\mid h_n(x')-g_n(x')\mid$$
$$ \le  2\|H-G\|_\infty\, + \mid h_n(x)-h_n(x')\mid\, \le \alpha-2\varepsilon +\,\mid h_n(x)-h_n(x')\mid . $$
Taking $n\to\infty$, we obtain
$$\mid g(x)-g(x')\mid\, \ge \alpha -\varepsilon$$
and
$$\mid g(x)-g(x')\mid\,\le\,\alpha-2\varepsilon+\mid h(x)-h(x')\mid .$$
So,
$\mid h(x)-h(x')\mid\,\ge \varepsilon$. Hence $h$ is discontinuous at $x_0$. It follows that $C(h)\cap C(f)=\emptyset$. Suppose that $h$ is Baire-one. Then $C(f)$ and $C(h)$ are disjoint dense $G_\delta$ sets in $I$ which contradicts the Baire Category Theorem. Thus $h$ is not Baire-one.
Consequently, $B(G, \alpha/2-\varepsilon)\cap\mathcal F_{p.w.}(I)=\emptyset$. 

Observe that our choice of $G$ depends on $\varepsilon$, so we write $G= G_\varepsilon$. Then  
we can repeat the respective argument from the proof of Theorem \ref{T1} to obtain the following
$$\underline{p}_{\mathcal F(I)}\left(\mathcal F_{p.w.}(I),F\right)=2\liminf\limits_{r\rightarrow 0^{+}}\frac{\gamma\left(F,r,\mathcal F_{p.w.}(I)\right)}{r}\ge\frac{2}{3}.$$
Hence $\mathcal F_{p.w.}(I)$ is at least lower $2/3$-porous in $\mathcal F(I)$. Moreover, by Lemma \ref{L_e} we deduce that $\mathcal F_{p.w.}(I)$ is lower $2/3$-porous in $\mathcal F(I)$.
\end{proof}

We can extend the chain of inclusions (\ref{inclu}) by adding the largest family $\mathcal F$. One can ask how small is $\mathcal F(I)$ in $\mathcal F$. Answering this question, we state the following result.

\begin{theorem}
The family $\mathcal F(I)$ is lower $2/3$-porous in $\mathcal F$.
\end{theorem}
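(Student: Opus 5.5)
The plan has the same two halves as the previous theorems. The upper bound $\underline{p}_{\mathcal F}(\mathcal F(I))\le 2/3$ comes from Lemma \ref{L_e} with $M:=\mathcal F(I)$ and $Z:=\mathcal F$. Its hypotheses are immediate: constant sequences converge pointwise, so $\Const\subset\mathcal F(I)$. Also, adding a constant sequence to a sequence of Baire-one self-maps keeps each term Baire-one, and the result stays in $\mathcal F$ whenever its values remain in $I$. So the real work is the lower bound, i.e. showing $\gamma(F,r,\mathcal F(I))\ge r/3$ for every $F\in\mathcal F(I)$ and every small $r$.

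Fix $F=(f_n)_{n\in\mathbb N}\in\mathcal F(I)$ with pointwise limit $f$, and take $r\in(0,1/2)$, $\varepsilon\in(0,r/3)$ and $\alpha:=2r/3$. I will destroy pointwise convergence at a single point, say $x=0$, by oscillation. Let $N$ be such that $|f_n(0)-f(0)|<\varepsilon$ for $n\ge N$, and put $s:=1$ if $f(0)\le 1/2$ and $s:=-1$ otherwise. Define
$$g_n:=f_n \mbox{ for } n<N \mbox{ or } n \mbox{ even},\qquad g_n:=f_n+\alpha s\chi_{\{0\}} \mbox{ for } n\ge N,\ n \mbox{ odd}.$$
Each $g_n$ is Baire-one, being a modification of $f_n$ at one point. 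The values of $g_n$ stay in $I$ by the same estimate as in Theorem \ref{T1}: $f_n(0)+\alpha<1/2+1/6+1/3=1$, and symmetrically when $s=-1$. Hence $G:=(g_n)_{n\in\mathbb N}\in\mathcal F$ and $\|G-F\|_\infty=\alpha$, so $B(G,\alpha/2)\subset B(F,r)$ because $\alpha+\alpha/2=r$.

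Next, let $H=(h_n)_{n\in\mathbb N}\in B(G,\alpha/2-\varepsilon)$. For large $n$, with $n$ odd and $n+1$ even, we have
$$\alpha=|g_n(0)-f_n(0)|\le |h_n(0)-h_{n+1}(0)|+2\|H-G\|_\infty+|f_n(0)-f_{n+1}(0)|.$$
Since $(f_n(0))_{n\in\mathbb N}$ is Cauchy, the last term is below $\varepsilon$ for large $n$, and $2\|H-G\|_\infty<\alpha-2\varepsilon$. This gives $|h_n(0)-h_{n+1}(0)|>\varepsilon$ for infinitely many $n$. So $(h_n(0))_{n\in\mathbb N}$ is not Cauchy and $H\notin\mathcal F(I)$. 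Therefore $B(G_\varepsilon,\alpha/2-\varepsilon)\cap\mathcal F(I)=\emptyset$ with $B(G_\varepsilon,\alpha/2-\varepsilon)\subset B(F,r)$. Letting $\varepsilon\to0$ yields $\gamma(F,r,\mathcal F(I))\ge r/3$, hence $\underline{p}_{\mathcal F}(\mathcal F(I),F)\ge 2/3$ for every $F$, and together with the first paragraph the porosity equals $2/3$.

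I expect no serious obstacle. The main point of care is the choice of sign $s$ and threshold $N$, so that the perturbed values stay in $I$ while the full distance $\alpha$ is achieved. This is handled exactly as in Theorem \ref{T1}, using $r<1/2$.
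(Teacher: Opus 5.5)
Your proof is correct and follows essentially the paper's argument: perturb the values at the point $0$ by $\pm\alpha$ along every other index, use the Cauchy property of $(f_n(0))_{n\in\mathbb N}$ to force $|h_n(0)-h_{n+1}(0)|>\varepsilon$ for all large odd $n$ and every $H\in B(G,\alpha/2-\varepsilon)$, and get the upper bound from Lemma \ref{L_e}. The only cosmetic difference is the choice of sign. The paper chooses it separately for each $n$ according to whether $f_n(0)\le 1/2$, so the values stay in $I$ without a threshold $N$ and $G$ does not depend on $\varepsilon$. You instead fix the sign via the limit $f(0)$ and start perturbing at $N$.
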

\begin{proof}
First observe that we can easily modify a given sequence $(f_n)_{n\in\mathbb N}\in\mathcal F(I)$ to obtain $(g_n)_{n\in\mathbb N}\in\mathcal F\setminus\mathcal F(I)$. It suffices to change the values of $f_n$'s at one point $x_0$ to ensure that $\lim_n f_n(x_0)$ does not exist. Recall that, if a Baire-one function on $I$ is changed at one point, then it remains Baire-one.

Now, fix 
$F=(f_n)_{n\in\mathbb N}\in\mathcal F(I)$ and $r\in(0,1/2)$. 
Set $\alpha:=2r/3$. For each $n\in\mathbb N$, we define $g_n\colon I\to I$ as follows. Let
$g_n(x):=f_n(x)$ for all $n\in\mathbb N$ and $x\neq 0$. Next, for any even $n\in\mathbb N$ let
$$g_n(0):=\begin{cases} f_n(0) + \alpha, & \text{ if } f_n(0)\in[0,1/2],\\ 
f_n(0) - \alpha, & \text{ if } f_n(0)\in (1/2,1], 
\end{cases}
$$
and let $g_n(0):=f_n(0)$ if $n\in\mathbb N$ is odd.
Then $g_n$'s are Baire-one functions from $I$ to $I$. So, $G:=(g_n)_{n\in\mathbb N}\in\mathcal F$.
Note that $\lim_n g_n(0)$ does not exist since $(g_{2k-1}(0))_{k\in\mathbb N}$ has the limit $c:=\lim_n f_n(0)$ and each limit point of
$(g_{2k}(0))_{k\in\mathbb N}$ is equal to $c\pm\alpha\neq c$.
We have $\|G-F\|_\infty=\alpha$ since $\mid g_k(0)-f_k(0)\mid\,=\alpha$ for some even index $k$.
From $\|G-F\|_\infty=\alpha$ we infer that $B(G,r/3)=B(G,\alpha/2)\subset B(F,r)$.

Let $\varepsilon\in(0,r/3)$. We will show that $B(G,\alpha/2-\varepsilon)\cap\mathcal F(I)=\emptyset$. So, take 
$H\in B(G,\alpha/2-\varepsilon)$ with $H=(h_n)_{n\in\mathbb N}$. 
Since $F\in\mathcal F(I)$, $(f_n)_{n\in\mathbb N}$ is pointwise convergent.
In particular, $(f_n(0))_{n\in\mathbb N}$ satisfies the Cauchy condition. 
So, pick an index $k_0$ such that $\mid f_{2k}(0)-f_{2k-1}(0)\mid\,\le \varepsilon$ for each $k\ge k_0$.

Thus for every $k\ge k_0$ we have
$$\mid g_{2k}(0)-g_{2k-1}(0)\mid\, \ge\alpha\,-\mid f_{2k}(0)-f_{2k-1}(0)\mid\,\ge \alpha-\varepsilon$$
and
$$\mid g_{2k}(0)-g_{2k-1}(0)\mid\,\le\,\mid g_{2k}(0)-h_{2k}(0)\mid
+\mid h_{2k}(0)-h_{2k-1}(0)\mid + \mid h_{2k-1}(0)-g_{2k-1}(0)\mid$$
$$\le 2\|H-G\|_\infty + \mid h_{2k}(0)-h_{2k-1}(0)\mid\,\le\alpha-2\varepsilon + \mid h_{2k}(0)-h_{2k-1}(0)\mid. $$
Hence $\mid h_{2k}(0)-h_{2k-1}(0)\mid\,\ge \varepsilon$ for each $k\ge k_0$. Thus $\lim_n h_n(0)$ does not exist.

Consequently, $B(G,\alpha/2-\varepsilon)\cap\mathcal F(I)=\emptyset$.
This implies that $\mathcal F(I)$ is at least lower $2/3$-porous in $\mathcal F$. Finally, Lemma \ref{L_e}
yields that it is lower $2/3$-porous in $\mathcal F$.
\end{proof}


\section{Porosity results for subspaces of $b\mathcal F$}

In this section, we consider the family of all Baire class-one functions $f\colon I \to \mathbb{R}$, denoted by $B_1(I, \mathbb{R})$, as well as the corresponding family $b\mathcal{F}$ of all uniformly bounded sequences of functions in $B_1(I, \mathbb{R})$.  Consequently, we do not assume that the values of Baire-one functions in a sequence belong to $I$. In contrast to $\mathcal F$, the family $b\mathcal F$ has a linear structure. Recall that $b\mathcal{F}$ is a normed space with the norm 
$\|\cdot\|_\infty$ defined at the end of Section 1.
In fact $b\mathcal F$ is a Banach space since its completeness can be proved in the same way as in the case of Proposition \ref{PP1}.

We modify the objects considered in \cite{Ali1} and \cite{Ali}. Similarly as before, we define the following subfamilies of $b\mathcal{F}$:
\begin{itemize}
    \item \textbf{$b\mathcal{F}_u(I, \mathbb{R})$} -- the family of all uniformly convergent sequences with Baire-one terms;
    \item \textbf{$b\mathcal{F}_{eq}(I, \mathbb{R})$} -- the family of pointwise convergent equi-$B_1$ sequences;
    \item \textbf{$b\mathcal{F}_{p.w.}(I, \mathbb{R})$} -- the family of all sequences with Baire-one terms that converge pointwise and their limits are Baire-one functions;
    \item \textbf{$b\mathcal{F}(I, \mathbb{R})$} -- the family of pointwise convergent sequences with Baire-one terms.
\end{itemize}

Observe that
\begin{equation} \label{eqi}
b\mathcal{F}_u(I, \mathbb{R})\subset b\mathcal{F}_{eq}(I, \mathbb{R})\subset b\mathcal{F}_{p.w.}(I, \mathbb{R})\subset b\mathcal{F}(I, \mathbb{R})\subset b\mathcal F,
\end{equation}
and all these inclusions are proper.

\begin{prop} \label{PP2}
All the subspaces of $b\mathcal F$ listed in (\ref{eqi}) are closed linear spaces. Consequently, they are Banach spaces.
\end{prop}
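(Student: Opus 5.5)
We need to show two things for each of the four subfamilies in (\ref{eqi}): it is a linear subspace of $b\mathcal F$, and it is closed in $b\mathcal F$. Once both are known, completeness is inherited from the Banach space $b\mathcal F$, so each family is a Banach space. Note that $b\mathcal F$ itself is already known to be Banach.

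\emph{Linearity.} The zero sequence belongs to every family. For $b\mathcal F(I,\mathbb R)$, sums and scalar multiples of pointwise convergent, uniformly bounded sequences of Baire-one functions are again of this kind, because $B_1(I,\mathbb R)$ is a linear space. For $b\mathcal F_{p.w.}(I,\mathbb R)$ we also use that the limit of $aF+bG$ is $af+bg$, which is Baire-one. For $b\mathcal F_u(I,\mathbb R)$ this is the standard linearity of uniform limits. For $b\mathcal F_{eq}(I,\mathbb R)$ it is exactly Remark \ref{rema}, combined with the pointwise convergence already treated.

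\emph{Closedness.} Let $F_\ell=(f^\ell_n)_n$ be elements of the family with $F_\ell\to F=(f_n)_n$ in $\|\cdot\|_\infty$.

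For $b\mathcal F(I,\mathbb R)$, the second part of the proof of Proposition \ref{PP1} applies verbatim with $\mathbb R$ in place of $X$. This uses the $3/k$ estimate and the completeness of $\mathbb R$, and shows that $(f_n(x))_n$ is Cauchy.

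For $b\mathcal F_{p.w.}(I,\mathbb R)$, let $f^\ell$ and $f$ be the pointwise limits. Passing to the limit in $n$ in $|f^\ell_n(x)-f_n(x)|\le\|F_\ell-F\|_\infty$ gives $\sup_x|f^\ell(x)-f(x)|\le\|F_\ell-F\|_\infty$. Hence $f$ is a uniform limit of Baire-one functions, and so it is Baire-one.

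For $b\mathcal F_u(I,\mathbb R)$, use the same bound on the limits together with a $3\varepsilon$ argument:
$$|f_n-f|\le|f_n-f^\ell_n|+|f^\ell_n-f^\ell|+|f^\ell-f|.$$
Fix $\ell$ with $\|F_\ell-F\|_\infty<\varepsilon$, and then choose $n$ large.

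For $b\mathcal F_{eq}(I,\mathbb R)$, given $\varepsilon>0$, pick $\ell$ with $\|F_\ell-F\|_\infty<\varepsilon/3$ and let $\delta$ be a gauge for $F_\ell$ and $\varepsilon/3$. Then $|x-y|<\min\{\delta(x),\delta(y)\}$ implies
$$|f_n(x)-f_n(y)|\le 2\|F-F_\ell\|_\infty+|f^\ell_n(x)-f^\ell_n(y)|<\varepsilon$$
for all $n$. So $F$ is equi-$B_1$, and it is pointwise convergent by the case of $b\mathcal F(I,\mathbb R)$.

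\emph{Main obstacle.} The only point needing care is the equi-$B_1$ case, namely that the gauge can be taken uniform in $n$. The estimate above handles it, because a single $\ell$ works simultaneously for all $n$. Everything else is routine.
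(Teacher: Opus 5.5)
Your proof is correct and follows the same outline as the paper. Linearity comes from the definitions together with Remark~\ref{rema}, closedness of $b\mathcal F(I,\mathbb R)$ comes from the second half of the proof of Proposition~\ref{PP1}, and completeness is inherited from the Banach space $b\mathcal F$. The only difference is that you write out the closedness arguments for $b\mathcal F_u(I,\mathbb R)$, $b\mathcal F_{p.w.}(I,\mathbb R)$ and $b\mathcal F_{eq}(I,\mathbb R)$: the uniform convergence $f^\ell\to f$ of the limits, the $3\varepsilon$ estimate, and the single-$\ell$ gauge argument. The paper only says these can be done as for the corresponding subsets of $\mathcal F$ in \cite{Ali}.
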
 
\begin{proof}
The fact that the considered subspaces are linear follows directly from their definitions. For the space
$b\mathcal F_{eq}$ we use Remark \ref{rema}. Of course, the uniform boundedness of sequences is preserved under operations $F+G$ and $\alpha F$. 

To show that $b\mathcal F(I,\mathbb R)$ is closed, we consider a sequence $(F_\ell)_{\ell\in\mathbb N}$ in $b\mathcal F(I,\mathbb R)$ convergent to $F=(f_n)_{n\in\mathbb N}\in b\mathcal F$. As in the second part of the proof of Proposition \ref{PP1}, we show that $F$ is a pointwise convergent sequence. Note that it is uniformly bounded by the standard argument using the norm of $b\mathcal F$.

The closedness of the remaining subspaces of $b\mathcal F$, listed in (\ref{eqi}), can be proved in a similar way as it was done for the respective subsets of $\mathcal F$ in \cite{Ali}. 

Since all these subspaces of $b\mathcal F$ are closed and linear, they all are Banach spaces.
\end{proof}

Clearly, if $Y$ is a closed proper linear subspace of a normed space $X$, then $Y$ is nowhere dense in $X$.
Indeed, otherwise $Y$ would contain a ball with centre zero which implies that $Y=X$.
So, every space different from $b\mathcal F$ in the chain (\ref{eqi}) is a nowhere dense subset of the next space of this chain. What can we say about their porosity?

Recall some notions of porosity in normed spaces.
Let $(X, \|\cdot\|)$ be a real normed linear space and $M \subset X$. 
By $X^*$ we denote the space of all continuous linear functionals on $X$.

\begin{df} [\cite{PreZaj}, \cite{Zaj1}] 
Let $R > 0$. We say that $M\subset X$ is $R$-ball porous if for any $x \in M$ and $\alpha \in (0,1)$, there exists $y \in X$ such that $\|x - y\| = R$ and $B(y,\alpha R) \cap M = \emptyset$.
\end{df}

\begin{remark} \label{remi}
Observe that, if $M\subset X$ is $R$-ball porous for every $R>0$, then $M$ is lower 1-porous. Indeed, fix
$x\in M$ and $r>0$. Using $r/2$-ball porosity of $M$, for each $\alpha\in (0,1)$ we can find $y\in X$ with $\|x-y\|=r/2$ such that 
$B(y,\alpha r/2)\cap M=\emptyset$. Since $B(y,\alpha r/2)\subset B(x,r)$,
we can take $\alpha\to 1$ which implies that $\underline{p}(M,x)=1$ by the arbitrariness of $r$.
Hence $\underline{p}(M)=1$ by the arbitrariness of $x$.
\end{remark}

\begin{df} [\cite{Strobin}]
We say that $M$ is $c$-porous if for any $x \in X$ and every $r > 0$, there are $y \in B(x,r)$ and $\phi \in X^* \setminus \{0\}$ such that
$$\{z \in X : \phi(z) > \phi(y)\} \cap M = \emptyset. $$
\end{df}

\begin{theorem} [\cite{Strobin}, Propositions 2.5 and 2.8] \label{FS}
A subset $M$ of a normed space $X$ is $c$-porous if and only if the convex hull ${\rm conv} M$ is nowhere dense.
If a set $M$ is $c$-porous then it is R-ball porous for every $R>0$.
\end{theorem}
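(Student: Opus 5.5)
We would prove both assertions through the closed convex set $C:=\cl({\rm conv}\,M)$, using the Hahn--Banach separation theorem. The basic observation is that a closed half-space $\{z\in X\colon \phi(z)\le t\}$ with $\phi\in X^*\setminus\{0\}$ is convex and closed. Hence it contains $M$ if and only if it contains $C$. We assume $X\neq\{0\}$, and we note that $M$ is nowhere dense if and only if the interior of its closure is empty.

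For ``$c$-porous $\Rightarrow$ ${\rm conv}\,M$ nowhere dense'', fix $x\in X$ and $r>0$. By $c$-porosity we take $y\in B(x,r)$ and $\phi\neq 0$ with $M\subset\{\phi\le\phi(y)\}$. By the observation above, also $C\subset\{\phi\le\phi(y)\}$. Pick $e$ with $\phi(e)>0$. For small $t>0$ the point $y+te$ lies in the open set $B(x,r)\cap\{\phi>\phi(y)\}$, so this open set is nonempty. It is disjoint from $C$. Since $x$ and $r$ are arbitrary, $C$ has empty interior, i.e.\ ${\rm conv}\,M$ is nowhere dense.

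For the converse, fix $x$ and $r$. Since $C$ has empty interior, there is $w\in B(x,r)\setminus C$. If $C=\emptyset$, any $\phi\neq 0$ and $y:=w$ work. Otherwise, separate the point $w$ from the closed convex set $C$: there is $\phi\in X^*\setminus\{0\}$ with $\phi(w)>\sup_C\phi$. Taking $y:=w$ gives $\{\phi>\phi(y)\}\cap M=\emptyset$.

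For ``$c$-porous $\Rightarrow$ $R$-ball porous'', fix $x\in M$, $R>0$ and $\alpha\in(0,1)$. Choose $\delta$ with $0<\delta<R(1-\alpha)/2$. By the first part $C$ has empty interior, so there is $w\in B(x,\delta)\setminus C$. Separate $w$ from $C$ by $\phi$ normalized so that $\|\phi\|=1$, and put $s:=\sup_C\phi$. Since $x\in M\subset C$, we get
$$\phi(x)\le s<\phi(w)\le\phi(x)+\delta .$$
Because $\|\phi\|=1$, we can pick a unit vector $e$ with $\phi(e)>(1+\alpha)/2$. Set $y:=x+Re$, so that $\|x-y\|=R$. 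For $z\in B(y,\alpha R)$ we have
$$\phi(z)>\phi(x)+R\phi(e)-\alpha R>\phi(x)+R(1-\alpha)/2>\phi(x)+\delta\ge s .$$
Hence $B(y,\alpha R)\cap M\subset B(y,\alpha R)\cap C=\emptyset$.

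The main obstacle is exactly this control of the gap $s-\phi(x)$ between the supremum of $\phi$ on $C$ and its value at the centre $x$. If the gap were large, the ball of radius $\alpha R$ around $y$ could still meet $M$. The gap is made at most $\delta$ by choosing the separated point $w$ very close to $x$. Combined with a unit vector $e$ on which $\phi$ nearly attains its norm, this yields the required ball.
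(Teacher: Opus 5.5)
Your proof is correct and complete. The paper does not prove this statement itself; it only cites Strobin (Propositions 2.5 and 2.8). So your argument supplies the proof the paper omits, and there is no in-paper route to compare it with. The three steps all check out:
\begin{enumerate}
\item Hahn--Banach strict separation of a point from the closed convex set $\cl({\rm conv}\,M)$ gives the equivalence.
\item The key estimate in the last step holds: because the separated point $w$ lies within $\delta$ of $x$, you get $\sup_C\phi<\phi(w)\le\phi(x)+\delta$.
\item A unit vector $e$ with $\phi(e)>(1+\alpha)/2$ then gives $\phi>\sup_C\phi$ on $B(x+Re,\alpha R)$.
\end{enumerate}

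Your assumption $X\neq\{0\}$ is genuinely needed, not just convenient. For $X=\{0\}$ we have $X^*\setminus\{0\}=\emptyset$, so $M=\emptyset$ has nowhere dense convex hull but is not $c$-porous. The equivalence as stated therefore implicitly requires a nontrivial space.

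Your third step uses only that $\cl({\rm conv}\,M)$ has empty interior. So it shows directly that this condition implies $R$-ball porosity for every $R>0$, without passing through $c$-porosity.

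In the paper's application, $M=Y$ is a closed proper linear subspace of $Z$, and there you can simplify further. Take $\phi\in Z^*$ with $\|\phi\|=1$ and $\phi|_Y=0$. Then the gap $\sup_Y\phi-\phi(x)$ is zero, so the $\delta$-approximation is unnecessary. Any unit vector $e$ with $\phi(e)>\alpha$ already gives $B(x+Re,\alpha R)\cap Y=\emptyset$; this is essentially Riesz's lemma. Your $\delta$ device is what makes the argument work for arbitrary $M$, where the gap can be positive.
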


\begin{cor}
Every space different from $b\mathcal F$ in the chain (\ref{eqi}) is c-porous in the next space of this chain. Consequently, it is $R$-ball porous for every $R>0$ and it is lower 1-porous.
\end{cor}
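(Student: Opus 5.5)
The plan is to apply Theorem \ref{FS} inside each ambient space of the chain. Fix a consecutive pair $Y\subset X$ in (\ref{eqi}) with $X\neq b\mathcal F$ playing the role of the smaller space being different from $b\mathcal F$; that is, $Y$ is one of $b\mathcal F_u(I,\mathbb R)$, $b\mathcal F_{eq}(I,\mathbb R)$, $b\mathcal F_{p.w.}(I,\mathbb R)$, $b\mathcal F(I,\mathbb R)$, and $X$ is the next space of the chain. We regard $X$ as a normed space in its own right with the norm $\|\cdot\|_\infty$; it is a Banach space by Proposition \ref{PP2}. By Proposition \ref{PP2}, $Y$ is a closed linear subspace of $X$, hence convex, so ${\rm conv}\, Y=Y$. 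By the observation preceding the definitions, a closed proper linear subspace is nowhere dense. So it remains only to know that each inclusion in (\ref{eqi}) is proper. Then ${\rm conv}\, Y$ is nowhere dense in $X$, and the first part of Theorem \ref{FS} gives that $Y$ is $c$-porous in $X$. The second part of Theorem \ref{FS} then yields $R$-ball porosity for every $R>0$, and Remark \ref{remi} gives lower $1$-porosity.

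The only genuine work is checking properness of the inclusions, which the paper asserts after (\ref{eqi}). I would verify it by reusing the earlier constructions, now with real values, so the range restriction to $I$ is irrelevant.

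\emph{First inclusion.} For $b\mathcal F_u\subsetneq b\mathcal F_{eq}$, the sequence $F^{\alpha,s}$ of Lemma \ref{L1} is bounded by $\alpha$. It is equi-$B_1$ and pointwise convergent, but not uniformly convergent.

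\emph{Second inclusion.} For $b\mathcal F_{eq}\subsetneq b\mathcal F_{p.w.}$, take $F=\mathbf 0$ and $g_n:=\chi_{\{r_n\}}$. This sequence converges pointwise to $0$, which is Baire-one. It is not equi-$B_1$ by Lemma \ref{L2} with $c=1$, $\beta=0$, whose proof never uses that the values lie in $I$.

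\emph{Third inclusion.} For $b\mathcal F_{p.w.}\subsetneq b\mathcal F(I,\mathbb R)$, take $g_n:=\sum_{i\le n}\chi_{\{r_i\}}$. This converges pointwise to $\chi_{\mathbb Q\cap I}$, which is not Baire-one since it has no continuity points.

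\emph{Fourth inclusion.} For $b\mathcal F(I,\mathbb R)\subsetneq b\mathcal F$, take $g_n:=(-1)^n\chi_{\{0\}}$, which does not converge at $0$.

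I expect no real obstacle beyond this bookkeeping. One might worry about whether $X^*$ in the definition of $c$-porosity should be taken as the dual of $X$ rather than of $b\mathcal F$. This is handled by applying Theorem \ref{FS} with the ambient normed space equal to $X$ itself, which is legitimate because Theorem \ref{FS} holds in any normed space.
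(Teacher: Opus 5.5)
Your proof is correct and follows the paper's argument: closedness from Proposition \ref{PP2} together with properness gives nowhere density, convexity and Theorem \ref{FS} give $c$-porosity and $R$-ball porosity, and Remark \ref{remi} gives lower $1$-porosity; in addition, you exhibit witnesses for the properness of the inclusions, which the paper only asserts. One small correction: Lemma \ref{L2} assumes $c>\beta>0$, so for the second inclusion take, e.g., $c=1$ and $\beta=1/2$ instead of $\beta=0$ (the hypothesis $|h_n(x)-f_n(x)|=0\le\beta$ still holds).
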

\begin{proof}
Fix a space $Y$ different from $b\mathcal F$ in the chain (\ref{eqi}). By Proposition \ref{PP2}
it is closed in the next space $Z$ of this chain. Thus we conclude that $Y$ is nowhere dense in $Z$,
and of course it is convex. Thus by Theorem \ref{FS} it is c-porous. Then it suffices to use the second
assertion of Theorem \ref{FS} and Remark \ref{remi}.
\end{proof}

Hence we see that, in Section 3, we have obtained a stronger kind of porosity for the respective sequence spaces, in comparison with the lower $2/3$-porosity of the respective spaces in Section 2.
This could be surprising at the first glance. The reason is that, in Section 2, we deal with functions taking values in $[0,1]$ which causes a kind of additional boundary conditions. In fact Lemma \ref{L_e} cannot be repeated for real-valued Baire-one functions on~$I$.

\hspace{0.3 em}

\noindent
{\bf Acknowledgements.} We would like to thank Piotr Szuca who has found some gaps in the previous versions of the paper and helped us to make several improvements.

\end{document}